\documentclass{amsart}
\usepackage[title]{appendix}
\usepackage[a4paper,top=3cm,bottom=2cm,left=3cm,right=3cm,marginparwidth=1.75cm]{geometry}
\usepackage[noabbrev,capitalise]{cleveref}
\usepackage{amsmath,amssymb,amsthm,amsfonts,mathrsfs,mathtools,relsize}
\usepackage{xcolor}
\usepackage{thm-restate}
\title{Dense minors of graphs with independence number two}

\author{Sergey Norin}
\address{Department of Mathematics and Statistics, McGill University, Montreal, QC, Canada}
\author{Paul Seymour}\address{Princeton University, Princeton, NJ 08544, United States}

\thanks{Norin is supported by an NSERC Discovery Grant. Seymour is supported by AFOSR grant A9550-19-1-0187.}

\newtheorem{thm}{Theorem}[section]

\newtheorem{lem}[thm]{Lemma}

\newtheorem{cor}[thm]{Corollary}

\newtheorem*{theorem*}{Theorem}

\theoremstyle{remark}

\newtheoremstyle{named}{}{}{\itshape}{}{\bfseries}{.}{.5em}{\thmnote{#3 }#1}
\theoremstyle{named}

\usepackage[shortlabels]{enumitem}

\newcommand{\mc}[1]{\mathcal{#1}}

\newcommand{\bb}[1]{\mathbb{#1}}

\newcommand{\brm}[1]{\operatorname{#1}}

\newcommand{\fs}[2]{\left(\frac{#1}{#2}\right)}
\newcommand{\s}[1]{\left(#1\right)}

\newcommand{\Var}{\brm{Var}}
\begin{document}

\begin{abstract} Motivated by Hadwiger's conjecture, we prove that every $n$-vertex graph $G$ with no independent set of size three  contains an  $\lceil n/2\rceil$-vertex simple minor $H$  with 
	$$0.98688 \cdot \binom{|V(H)|}{2} - o(n^2)$$
	edges.
\end{abstract}
\maketitle

\section{Introduction}

All the graphs in this paper are simple.
In 1943 Hadwiger conjectured \cite{Had43} that every graph with chromatic number at least $t$ contains the complete graph $K_t$ on $t$ vertices as a minor. Hadwiger's conjecture remains wide open for every $t \geq 7$ and many of its relaxations  have been investigated. (See \cite{Sey16Survey} for a survey.) In particular, over the years increasingly strong  lower bounds have been obtained on the function $f(t)$ such that every  graph with chromatic number at least $t$ is guaranteed to contain $K_{f(t)}$ as a minor. The current  record $f(t)=\Omega(\frac{t}{\log\log t})$ is due to Delcourt and Postle~\cite{DelPos21}.

Let $\alpha(G)$ denote the maximum size of an independent set in $G$. 
Hadwiger's conjecture implies that every graph $G$ with $\alpha(G) \leq 2$ contains $K_{\lceil |V(G)|/2 \rceil}$ as a minor. It remains open and still very challenging in this case. A classical result of Duchet and Meyniel~\cite{DucMey82} implies that  every graph $G$ with $\alpha(G)=2$ contains $K_{\lceil |V(G)|/3 \rceil}$ as a minor.  A well-known open problem (see e.g. \cite[Question 4.8]{Sey16Survey}) asks for a constant factor improvement of their bound. 

\vskip 5pt

We consider relaxing Hadwiger's conjecture in a different direction, investigating maximum $g(t)$ such that every graph with chromatic number at least $t$ contains a minor with $t$ vertices and $g(t)$ edges. (Hadwiger's conjecture asserts that $g(t)=\binom{t}{2}$.) 

While it is not known whether $f(t)$ is linear, the function $g(t)$ is quadratic. Indeed, every graph with chromatic number at least $t$ contains a subgraph with minimum degree at least $t-1$. Mader~\cite{Mader68} showed that every graph with 
average degree at least $t-1$ contains a minor with at most $t$ vertices and minimum degree at least $t/2$,
and hence with at least $t(t+2)/8$ edges; and
consequently there is a minor with exactly $t$ vertices and at least $t(t+2)/8$
edges. It follows that $g(t) \geq \frac{t(t+2)}{8}=\frac{1}{4}\binom{t}{2} + o(t^2)$. The factor $1/4$ can likely be significantly improved. However it appears to be quite challenging, for example, to show that every graph  with chromatic number at least $t$ contains a minor on $t$ vertices with at most one tenth of all possible edges missing.

We concentrate on graphs $G$ with  $\alpha(G) \leq 2$. Our main result, stated below, guarantees in such graphs the 
existence of a minor with  $\lceil |V(G)|/2 \rceil$ vertices and with fewer than $1/76$ of all possible edges missing.

\begin{thm}\label{t:main}
	Let $G$ be a graph with $\alpha(G) \leq 2$.  Then there exists a  minor $H$ of $G$ with $|V(H)|= \lceil |V(G)|/2 \rceil$ such that
	\begin{equation}\label{e:main}|{E}(H)| \geq  \s{\gamma - o(1)}\binom{|V(H)|}{2},
	\end{equation}
	where 
	$$ \gamma = 1-\max_{ z \in [0,1/4]}\frac{z^3 (5 - 38 z + 92 z^2 - 80 z^3)}{(1 - 3 z + 3 z^2)^2}=0.986882\ldots$$
\end{thm}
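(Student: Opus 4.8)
First I would reduce the problem to a density-increment statement about *edge colorings* of the complement. Since $\alpha(G)\le 2$, the complement $\overline G$ is triangle-free, so $G$ is the complement of a triangle-free graph on $n$ vertices. A minor $H$ of $G$ on $\lceil n/2\rceil$ vertices with few non-edges is obtained by choosing a partition of (a subset of) $V(G)$ into $\lceil n/2\rceil$ nonempty branch sets $B_1,\dots,B_m$, each inducing a connected subgraph of $G$, and the non-edges of $H$ correspond exactly to pairs $\{i,j\}$ such that \emph{no} edge of $G$ joins $B_i$ to $B_j$; equivalently $B_i\cup B_j$ induces a complete bipartite subgraph of $\overline G$ (with both sides nonempty). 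A natural and essentially optimal choice of branch sets (for a graph with no independent set of size three) is to use branch sets of size $1$ and $2$: a size-$2$ branch set $\{u,v\}$ must be an edge of $G$, i.e. a non-edge of $\overline G$. So the plan is: pick a matching $M$ in $G$ (equivalently, an ``anti-matching'' in $\overline G$), contract each edge of $M$, and delete the leftover vertices; this yields $H$ with $m=\lceil n/2\rceil$ vertices if $|M|=n-\lceil n/2\rceil=\lfloor n/2\rfloor$, i.e. $M$ is a near-perfect matching of $G$. Since $\overline G$ is triangle-free, $G$ has a Hamilton path (Chvátal--Erdős, or directly for $\alpha\le 2$), hence a near-perfect matching, so such $M$ exists; the real content is to choose $M$ so that the resulting $H$ is dense.

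Second, I would set up the optimization that produces the constant $\gamma$. Fix the near-perfect matching $M=\{e_1,\dots,e_m\}$ giving branch sets $B_1,\dots,B_m$ (one of which may be a singleton if $n$ is odd; this costs only $o(n^2)$). The number of non-edges of $H$ is the number of pairs $i<j$ with $B_i,B_j$ completely joined in $\overline G$. Write $d$ for the (average) density of $\overline G$, say $\overline G$ has $\mu\binom n2$ edges. If $\mu$ is small, $\overline G$ is sparse and already $G$ is dense — more carefully, the number of pairs of branch sets completely joined in $\overline G$ is crudely at most the number of pairs $\{i,j\}$ such that there is \emph{some} edge of $\overline G$ between $B_i$ and $B_j$, which is $O(\mu n^2)$; so for $\mu$ below the relevant threshold we are done immediately. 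If $\mu$ is not small, I would exploit triangle-freeness: by Kővári--Sós--Turán / the fact that $\overline G$ is $K_{2,2}$-free \emph{unless} it contains a $C_4$, but more to the point a triangle-free graph that is ``locally dense'' between a contracted pair $\{u,v\}$ of $G$ is very restricted, because $N_{\overline G}(u)$ and $N_{\overline G}(v)$ are each independent in $\overline G$ and $\{u,v\}$ is an edge of $G$. The parameter $z\in[0,1/4]$ in the statement should be (roughly) $\mu$, or the fraction of vertices in a densest part of $\overline G$; the explicit rational function $\frac{z^3(5-38z+92z^2-80z^3)}{(1-3z+3z^2)^2}$ is exactly the ``loss'' one computes by (i) splitting $V(\overline G)$ according to a Zykov-type symmetrization / the structure of extremal triangle-free graphs, and (ii) choosing $M$ greedily or randomly to contract pairs inside the low-degree region, then bounding, for the chosen $M$, the expected number of completely-joined pairs of branch sets. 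I would carry out a weighted/fractional relaxation: think of a triangle-free weighted graph on $V(\overline G)$, where a pair of branch sets is completely joined iff all four potential edges are present; in the worst case this is governed by a blow-up of a fixed small triangle-free graph (complete bipartite, or $C_5$, or an iterated blow-up), and optimizing over the blow-up ratios yields a one-variable problem whose maximum is the bracketed quantity, giving $\gamma=1-(\text{that max})$.

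Third — and this is where I expect the main obstacle — I would need to prove that the worst case really is a blow-up of a \emph{bounded} triangle-free graph, so that the infinite-dimensional optimization collapses to the displayed one-parameter expression. The clean way is a \emph{stability/compactness} argument in the spirit of graph limits: encode ``$G$ with $\alpha\le 2$ together with a choice of near-perfect matching'' as a limit object (a graphon $W$ for $\overline G$ that is triangle-free in the sense $\int W(x,y)W(y,z)W(x,z)=0$, plus a fractional perfect matching on the non-support of $W$), show the quantity to be optimized is continuous, and then prove the optimum graphon is a step function (finitely many parts) by a local-variation argument: moving weight to equalize behavior can only help, and triangle-freeness forces the support to be a blow-up of a small graph. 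The technical heart is the local-variation / convexity computation showing no ``spreading out'' beyond finitely many parts helps — this is the step most likely to be long and delicate, because the objective (count of \emph{completely} joined pairs, a degree-$4$ expression in the part sizes) interacts non-convexly with the triangle-free constraint, and one must also simultaneously optimize the matching. Once the structure is pinned down to, say, a balanced blow-up of $C_5$ or of $K_{t,t}$ with an internal blow-up (the natural extremal examples for $\alpha\le 2$ à la the Duchet--Meyniel and Ramsey-graph constructions), substituting the part ratios and eliminating variables via Lagrange conditions produces exactly $z\in[0,1/4]$ and the rational function above, and then $\gamma=0.98688\ldots$ follows by a routine single-variable maximization. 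Everything lossy in the reduction (odd $n$, the singleton branch set, lower-order terms in Kővári--Sós--Turán-type bounds, the $o(1)$ in passing from graphons back to finite graphs) is absorbed into the $o(1)$ in \eqref{e:main}.
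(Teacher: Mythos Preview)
Your plan misses the two structural ideas that drive the paper's proof and produce the exact constant $\gamma$. First, the paper does \emph{not} use only size-$1$ and size-$2$ branch sets. After disposing of the case $\omega(G)\ge |V(G)|/4$ via a result of Chudnovsky--Seymour, it fixes a maximum clique $Z$ of size $k=\omega(G)$, uses the $k$ vertices of $Z$ as singleton branch sets (pairwise adjacent in $H$ for free), chooses a random matching of size $n-2k$ on $V(G)\setminus Z$ (here $n=|V(G)|/2$), and partitions the remaining $3k$ vertices into $k$ vertex-disjoint \emph{seagulls} (induced three-vertex paths), which is possible by another Chudnovsky--Seymour result. The point of a seagull is that contracting it yields a vertex adjacent to \emph{everything} in $H$, so seagull branch sets cost no non-edges at all. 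Hence the only non-edges of $H$ come from ``bad quadruples'' (two matching pairs) and ``bad triples'' (a vertex of $Z$ and a matching pair). Your pure-matching construction throws all $\lceil |V(G)|/2\rceil$ branch sets into the bad-quadruple count; the clique-and-seagull trick removes $2k$ branch sets from that count essentially for free, and this gain is exactly what the parameter $z=k/|V(G)|$ tracks --- it is \emph{not} the edge density of $\overline G$ as you guess.

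Second, the optimization is neither a graphon compactness argument nor a stability-to-blow-up analysis; there is no extremal triangle-free structure to identify. The matching is produced by taking a uniformly random pairing of $V(G)\setminus Z$, conditioning (via Chebyshev) on it containing many edges of $G$, and sub-selecting $n-2k$ of those edges; because only $n-2k$ edges are needed rather than a full perfect matching, this sub-selection is always possible, and one can bound directly the probability that any given bad quadruple or bad triple survives. The resulting upper bound on $\binom{n}{2}-|E(H)|$ is an explicit rational function of $z=k/|V(G)|$ and $\zeta=\bigl(\sum_{v\in Z}\deg_{\overline G}(v)\bigr)/|V(G)|^2$; one checks it is increasing in $\zeta$ on $[0,z^2]$, substitutes the worst case $\zeta=z^2$, and reads off the one-variable expression in the statement. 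Your proposed route (symmetrize to a finite blow-up, then Lagrange multipliers) is not how the formula arises, and you give no argument that the optimum over triangle-free graphons is attained at a step function --- the very step you flag as ``long and delicate'' --- so the plan as written does not reach the stated $\gamma$.
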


\section{Proof of \cref{t:main}}

We start with a brief outline of the proof. 
Let $G$ be a graph with $\alpha(G) \leq 2$. We may assume that $|V(G)|$ is even. 
Let $\omega(G)$ denote the maximum size of cliques in $G$. If $\omega(G) \geq |V(G)|/4$ then $G$ contains a $K_{|V(G)|/2}$ minor by a result of   Chudnovsky and Seymour~\cite{ChuSey12}, which we state below. Thus we assume  $\omega(G) < |V(G)|/4$. 

A \emph{seagull} in $G$ is an induced three-vertex path. Note that  every 
vertex of $G$ has a neighbour in every seagull, and so by contracting the edges of a seagull, we obtain a vertex complete to the rest of the graph. Also note that the set of non-neighbours of every vertex forms a clique, and so every vertex of $G$ has at most $\omega(G)$ non-neighbours. We repeatedly use these observations.

We find a minor $H$ satisfying the theorem by choosing an arbitrary maximum clique $Z$,  partitioning the remaining vertices into a matching of size $|V(G)|/2-2\omega(G)$ and a collection of seagulls, and contracting the edges of the matching and of the seagulls. 
We find the matching first,  
randomly, ensuring that there are not too many non-adjacencies between pairs of matching edges and between matching edges and $Z$.
There are exactly $3\omega(G)$ vertices of $V(G)-Z$ that are left uncovered by the matching, and we partition them into seagulls.

\vskip 10pt
We now move on to details.
The last part of the above argument uses a characterization of graphs $G$  with $\alpha(G) \leq 2$ containing $k$ pairwise disjoint seagulls due to Chudnovsky and Seymour~\cite{ChuSey12}. To state this charactezization we need a couple of definitions.

Let $C$ be a clique in the graph $G$. Let $X$ consist of all vertices in $V(G)-C$ with a neighbour and a non-neighbour in $C$. The \emph{capacity} $\brm{cap}(C)$ of $C$ is defined to be equal to $\frac{|V(G)-C|+|X|}{2}$.
A \emph{five-wheel} is a six-vertex graph obtained from a cycle of
length five by adding one new vertex adjacent to every vertex of the cycle. We denote by $\bar{G}$ the complement of a graph $G$.

\begin{thm}[{\cite[1.6]{ChuSey12}}]\label{t:seagull}
	Let $G$ be a graph with $\alpha(G) \leq 2$, let $k \geq 0$ be an integer.  Then   $G$ contains $k$ pairwise disjoint seagulls if and only if
	\begin{description}
		\item[(i)] $|V(G)| \geq 3k$,
		\item[(ii)] $G$ is $k$-connected,
		\item[(iii)] $\brm{cap}(C) \geq k$ for every clique $C$ in $G$,
		\item[(iv)] $\bar{G}$ contains a matching of size $k$,
		\item[(v)] if $k=2$ then $G$ is not a five-wheel. 
	\end{description}
\end{thm}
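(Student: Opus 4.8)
Suppose $G$ contains pairwise disjoint seagulls $S_1,\dots,S_k$, where $S_i$ has ends $a_i,c_i$ and middle vertex $b_i$. Then $|V(G)|\ge 3k$ gives (i), and the non-edges $a_1c_1,\dots,a_kc_k$ form a matching of size $k$ in $\bar G$, giving (iv). For (ii): if $T$ were a cutset of $G$ with $|T|<k$ then, since $\alpha(G)\le 2$, $G-T$ has exactly two components (three would give an independent triple), each a clique; but a seagull can neither lie inside a clique nor meet both components without meeting $T$, so every $S_i$ meets $T$, whence $|T|\ge k$, a contradiction, and together with $|V(G)|\ge 3k$ this yields $k$-connectivity. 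For (iii): fix a clique $C$; examining which of $a_i,b_i,c_i$ lie in $C$ and using $a_ic_i\notin E(G)$, one checks in each case that $S_i$ contributes at least $2$ to $|V(G)-C|+|X|$ (a vertex of $S_i$ that lies in $V(G)-C$ and has both a neighbour and a non-neighbour in $C$ being counted twice); summing over the disjoint $S_i$ gives $\brm{cap}(C)\ge k$. For (v): a short case check shows the five-wheel has no two disjoint seagulls, so $G$ is not one.

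\textbf{Sufficiency: the plan.} Now assume (i)--(v). I would argue by induction on $k$, and for fixed $k$ by induction on $|V(G)|$. The cases $k\le 1$ are immediate: a connected non-complete graph on at least three vertices contains an induced $P_3$. The case $k=2$ I would settle by a separate, largely hands-on argument, and this is the only place where condition (v) enters --- isolating it here is what confines the exception to low order. For $k\ge 3$ the plan is reductive: find a single seagull $S$ such that $G':=G\setminus V(S)$ still satisfies (i)--(v) with $k$ replaced by $k-1$, then apply the inductive hypothesis to $G'$ and adjoin $S$. Conditions (i) and (v) for $G'$ come for free (the latter because $k-1=2$ is a bounded case), so the task is to choose $S$ so that $G'$ inherits $(k-1)$-connectivity, capacity at least $k-1$ for every clique, and a matching of size $k-1$ in $\bar{G'}$.

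\textbf{Sufficiency: the obstacles.} Each of those three requirements can be spoiled by a careless choice of $S$, and they constrain $S$ in competing ways; reconciling all three at once is the heart of the proof. The matching requirement points to taking the non-edge of $S$ on an edge of a maximum matching $M$ of $\bar G$ with its centre outside $V(M)$, so that $M$ loses exactly one edge; to arrange this one must understand, via the Gallai--Edmonds decomposition of the triangle-free graph $\bar G$, which edges of $\bar G$ can serve as the non-edge of a seagull at all, since an end-pair needs a common $G$-neighbour to play the centre. The connectivity requirement fails only if $S$ eats too much of a near-minimum separation of $G$, and a graph with $\alpha(G)\le 2$ having a separation of order close to $k$ has the rigid form of two cliques glued along a small interface, which both restricts where a seagull may sit and, when the configuration is degenerate, lets one construct all $k$ seagulls at once. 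The capacity requirement asks that every clique $C'$ of $G'$ have $\brm{cap}_{G'}(C')\ge k-1$; since deleting three vertices lowers capacities by a bounded amount, the dangerous cliques are those near-tight in $G$, and a near-tight clique must be so large that it dominates $G$, again permitting a direct construction of the seagulls through it. The main obstacle, accordingly, is to pin down exactly the graphs in which \emph{no} single seagull deletion keeps all of (ii), (iii) and (iv), and to check by hand that each such graph either admits a directly built seagull packing or already violates one of (i)--(v) --- and the five-wheel is precisely the residual configuration surviving this analysis at $k=2$, hence the sole exception. (Alternatively one might try to obtain (i)--(v) as the duality conditions of a min-max formula for the maximum seagull packing, but a structural case analysis of this flavour seems hard to avoid.)
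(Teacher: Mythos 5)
First, a point of reference: the paper does not prove \cref{t:seagull} at all --- it is quoted verbatim from Chudnovsky and Seymour \cite{ChuSey12}, where it is the main theorem of a substantial paper. Your necessity argument is correct and complete: the non-edges of the seagulls give the matching in (iv); the observation that $G\setminus T$ splits into two cliques for any cutset $T$, so that every seagull must meet $T$, gives (ii); the case analysis showing each seagull contributes at least $2$ to $|V(G)-C|+|X|$ (the case with two seagull vertices inside $C$ being the one where membership of the third vertex in $X$ is needed) gives (iii); and the five-wheel check gives (v). That is the easy direction.

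The sufficiency direction, however, is not proved: what you have written is explicitly a plan (``I would argue by induction\dots'', ``the main obstacle, accordingly, is to pin down\dots'') together with an accurate description of why the plan is hard to execute. The entire mathematical content of the theorem lies in showing that some seagull $S$ can be deleted so that (ii), (iii) and (iv) survive with $k$ replaced by $k-1$, or in classifying and directly handling the graphs in which no such $S$ exists; none of that is carried out. In particular, you do not prove the base case $k=2$ (where the five-wheel exception must be isolated), you do not establish the asserted structure of graphs with a separation of order close to $k$ or with a clique of capacity close to $k$, and you do not show that the Gallai--Edmonds analysis of the triangle-free graph $\bar{G}$ actually yields a non-edge extendable to a seagull compatible with the other two constraints simultaneously. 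The proof in \cite{ChuSey12} occupies most of that paper and passes through several intermediate structural lemmas; an outline at this level cannot substitute for it. Since the present paper treats the theorem as a black box, the honest course is to do the same and cite \cite{ChuSey12}, rather than to present the sketch as a proof.
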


We will use an easy-to-apply corollary of \cref{t:seagull}.

\begin{cor}\label{l:seagull}
	Let $k \geq 0$ be an integer. Let $G$ be a graph with $\alpha(G) \leq 2$,   $|V(G)|=3k$ and $\omega(G) \leq k$;
 then $G$ has $k$ pairwise disjoint seagulls.  
\end{cor}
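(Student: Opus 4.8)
The strategy is to verify conditions (i)--(v) of \cref{t:seagull} for $G$ with $\alpha(G)\le 2$, $|V(G)|=3k$, $\omega(G)\le k$. Condition (i) is immediate since $|V(G)|=3k\ge 3k$. For (iv), note that $\bar G$ has no triangle (as $\alpha(G)\le 2$), and a triangle-free graph on $3k$ vertices... wait, that's not enough either. Let me reconsider. Actually $\bar G$ having a matching of size $k$ on $3k$ vertices: by König/Gallai type reasoning, if $\bar G$ has no matching of size $k$ then it has a vertex cover... no. The cleanest route: the maximum matching in $\bar G$ misses some set $S$ of vertices; $S$ is independent in $\bar G$, hence a clique in $G$, so $|S|\le\omega(G)\le k$, giving a matching of size at least $(3k-k)/2 = k$. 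That handles (iv).

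For (ii), $k$-connectivity: if $G$ has a cutset $C$ of size $<k$, then $G-C$ has at least two components, and since $\alpha(G)\le 2$ there are at most two components and each is a clique. Each component has size $>2k$... no wait, the two components together have $3k-|C| > 2k$ vertices, so one has more than $k$ vertices, contradicting $\omega(G)\le k$. So (ii) holds. For (iii), capacity: for any clique $C$, $|V(G)-C| = 3k-|C| \ge 3k-k = 2k$, so $\brm{cap}(C) = (|V(G)-C|+|X|)/2 \ge |V(G)-C|/2 \ge k$, and (iii) holds. Finally (v): if $k=2$, we need $G$ not a five-wheel; but a five-wheel has a clique of size $3$ (any edge of the 5-cycle together with the hub), so $\omega(G)\ge 3 > 2 = k$, contradicting the hypothesis, so this case cannot arise.

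I expect no serious obstacle here — the corollary is a clean specialization, and each of the five conditions follows from the size/clique-number hypotheses by a short counting argument. The only point requiring a moment's care is (iv), where one must remember to use that independent sets in $\bar G$ are cliques in $G$ and invoke the bound on the number of vertices uncovered by a maximum matching via the Gallai identity (a maximum matching of a graph on $m$ vertices leaves exactly $m$ minus twice its size uncovered, and the uncovered set is independent). One should also double-check the degenerate small cases $k=0,1$, which are trivial.
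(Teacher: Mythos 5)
Your proof is correct and follows essentially the same route as the paper's: you verify each of conditions (i)--(v) of \cref{t:seagull} using the same short counting arguments (the uncovered vertices of a maximum matching in $\bar G$ form a clique in $G$ of size $\le k$; a cutset of size $<k$ would leave two cliques totalling $>2k$ vertices; the capacity bound is immediate from $|C|\le k$; and a five-wheel has $\omega\ge 3$). The only cosmetic difference is that the paper phrases (iv) via a maximal matching and you via a maximum one, but the underlying fact — uncovered vertices are pairwise nonadjacent in $\bar G$ — is identical.
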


\begin{proof}
	We will verify that conditions (i)-(v) of \cref{t:seagull} hold. Condition (i) is immediate. 
	
	Suppose for a contradiction that (ii) does not hold. Then there exists $X \subseteq V(G)$ such that 
	$|X| < k$ and $G \setminus X$ is not connected. It follows that $G \setminus X$ is a union of two cliques. As $|V(G \setminus X)| \geq 2k+1$,   one of these cliques must have size at least $k+1$. This gives the desired contradiction, as $\omega(G) \leq k$.
	
	For every clique $C$ in $G$ we have  $|C| \leq k$,  and so $\brm{cap}(C) \geq \frac{|V(G)| - |C|}{2} \geq k$. Thus (iii) holds.
	
	Let $M$ be a maximal matching in $\bar{G}$. Then the vertices of $G$ incident with no edge of $M$ form a clique, implying that $|V(G)|-2|M| \leq k$. Thus $|M| \geq k$ and (iv) holds.
	
	Finally, (v) holds, as a five-wheel contains a clique of size three.  
\end{proof}

Additionally,  as mentioned above, we use another consequence of  \cref{t:seagull} established in~\cite{ChuSey12}.

\begin{thm}[{\cite[1.3]{ChuSey12}}]\label{t:seagull2}
	Let $k \geq 0$ be an integer. Let $G$ be a graph with $\alpha(G) \leq 2$, with an even number of vertices and 
with $\omega(G) \geq |V(G)|/4$. Then $G$ contains a $K_{|V(G)|/2}$ minor.
\end{thm}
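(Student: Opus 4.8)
The plan is to derive this from \cref{t:seagull}, following the strategy of \cref{l:seagull}: keep a maximum clique as a set of singleton branch sets, and fill in the remaining vertices using seagulls (slightly enlarged when necessary). Write $n=|V(G)|$. If $\omega(G)\ge n/2$ then $G$ already contains $K_{n/2}$ as a subgraph and we are done, so assume $n/4\le\omega<n/2$, where $\omega:=\omega(G)$. Fix a maximum clique $Z$, put $A=V(G)-Z$ and $k=n/2-\omega$; then $k\ge 1$, and $|A|=n-\omega\ge 3k$ (this last inequality is equivalent to the hypothesis $\omega\ge n/4$). The goal is to produce $n/2$ pairwise disjoint connected subsets of $V(G)$, any two of which are joined by an edge.

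First I would check that $G[A]$ satisfies conditions (i)--(v) of \cref{t:seagull} with this value of $k$; note $\alpha(G[A])\le 2$ and $\omega(G[A])\le\omega$. Condition (i) is $|A|\ge 3k$. Conditions (iii), (iv), (v) go exactly as in the proof of \cref{l:seagull}, now bounding clique sizes by $\omega$ rather than $k$: every clique $C$ of $G[A]$ has $\brm{cap}(C)\ge(|A|-|C|)/2\ge(n-2\omega)/2=k$; a maximal matching $M$ of $\overline{G[A]}$ leaves an independent set of $\overline{G[A]}$, i.e.\ a clique of $G[A]$, uncovered, so $2|M|\ge|A|-\omega\ge n-2\omega=2k$; and if $k=2$ and $G[A]$ were a five-wheel then $|A|=6$ and $\omega(G[A])=3$, while $\omega=n/2-2$ and $n=|A|+\omega$ force $n=8$ and $\omega=2<3\le\omega$, absurd. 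The one substantive point is condition (ii), the $k$-connectivity of $G[A]$. Suppose some $X\subseteq A$ with $|X|<k$ has $G[A]\setminus X$ disconnected. A disconnected graph with independence number at most two has exactly two components, each a clique, so $A\setminus X=C_1\sqcup C_2$ with no edges between the cliques $C_1,C_2$. For each $z\in Z$, the non-neighbours of $z$ form a clique disjoint from $Z$, hence contained entirely in $C_1$ or entirely in $C_2$, so $z$ is complete to $C_1$ or to $C_2$; splitting $Z$ into $Z_1,Z_2$ accordingly makes $Z_1\cup C_1$ and $Z_2\cup C_2$ cliques of $G$, whence $|Z_1|+|C_1|\le\omega$ and $|Z_2|+|C_2|\le\omega$. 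Summing and using $|Z|=\omega$ and $|C_1|+|C_2|=|A|-|X|=n-\omega-|X|$ gives $n-|X|\le 2\omega$, i.e.\ $|X|\ge n-2\omega=2k\ge k$, contradicting $|X|<k$. Hence $G[A]$ is $k$-connected, and \cref{t:seagull} supplies $k$ pairwise disjoint seagulls $S_1,\dots,S_k$ in $G[A]$.

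Finally I would assemble the minor. There are $|A|-3k$ vertices of $A$ outside $S_1\cup\cdots\cup S_k$; add each of them, one at a time, to any of the $S_i$'s. Since $\alpha(G)\le 2$, every vertex of $G$ has a neighbour in each seagull, so the resulting sets $B_i\supseteq S_i$ stay connected throughout, and each $B_i$ is not a clique because the two ends of $S_i$ are non-adjacent; thus $A$ is partitioned into connected non-clique sets $B_1,\dots,B_k$. Now take as branch sets the singletons $\{z\}$ for $z\in Z$ together with $B_1,\dots,B_k$: these are $\omega+k=n/2$ pairwise disjoint connected subsets of $V(G)$, and any two are adjacent --- two singletons because $Z$ is a clique, a singleton $\{z\}$ and a set $B_i$ because $z$ has a neighbour in the seagull $S_i\subseteq B_i$, and two sets $B_i,B_j$ because any vertex of $S_i$ has a neighbour in $S_j$ (again using $\alpha(G)\le 2$). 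Contracting each branch set yields a $K_{n/2}$ minor. The main obstacle is condition (ii) above: the rest is bookkeeping, but the $k$-connectivity of $G[A]$ requires the observation that a small cutset would break $G[A]$ into two cliques, which, merged with the parts of $Z$ dominating them, would be two cliques of total size exceeding $2\omega$ --- impossible once $\omega<n/2$.
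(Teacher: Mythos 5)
Your proof is correct. Note, however, that the paper does not prove this statement at all: it is quoted verbatim from Chudnovsky and Seymour \cite{ChuSey12} as an external black box, so there is no internal proof to compare against. What you have done is re-derive \cref{t:seagull2} from the other imported result, \cref{t:seagull}, and your derivation is sound: the reduction to $n/4\le\omega<n/2$, the verification of conditions (i) and (iii)--(v) for $G[A]$ with $k=n/2-\omega$ (each hinging on $\omega(G[A])\le\omega$ rather than on $k$ as in \cref{l:seagull}), the connectivity argument via splitting a disconnected $G[A]\setminus X$ into two cliques $C_1,C_2$ and partitioning $Z$ according to which $C_i$ each vertex dominates, and the final assembly of $\omega$ singleton branch sets plus $k$ enlarged seagulls all check out. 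This is essentially the argument in the cited source and closely parallels the clique-plus-matching-plus-seagulls construction the paper itself uses in \cref{l:main}, so while your write-up adds a self-contained proof of a quoted lemma, it does not represent a genuinely different route.
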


Next we make a couple of observations about the procedure that we use to generate a random matching. 

Let $X$ be a finite set with $|X|$ even. We denote by $X^{(2)}$ the set of all two-element subsets of $X$. Let $\mc{M}_X$, or simply $\mc{M}$ for brevity, denote a partition of $X$ into pairs, chosen uniformly at random. 
Note that \begin{equation}\label{e:MBounds} \Pr[e \in \mc{M}_X] =\frac{1}{|X|-1} \qquad \text{and} \qquad \Pr[e,f \in \mc{M}_X] = \frac{1}{(|X|-1)(|X|-3)}\end{equation} for every $e \in X^{(2)}$ and every $f \in X^{(2)}$ disjoint from $e$. 
 
We need a bound on the concentration of $|F \cap \mc{M}|$ for   $F \subseteq X^{(2)}$.
\begin{lem}\label{l:Chebyshev}
	Let $X$ be a finite set with $|X| \geq 4$ even. Then for every $F \subseteq X^{(2)}$ and every $\lambda>0$ we have
	\begin{equation}
	\Pr\left[\left||F \cap \mc{M}_X| - \frac{|F|}{|X|-1}\right| \geq \lambda \right] \leq \frac{|X|}{\lambda^2}.
	\end{equation}
\end{lem}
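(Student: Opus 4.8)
The plan is to apply Chebyshev's inequality. Write $n=|X|$ and $m=|F|$, and let $Y=|F\cap\mc{M}_X|=\sum_{e\in F}\mathbf{1}[e\in\mc{M}_X]$. By the first identity in \eqref{e:MBounds} and linearity of expectation, $\e[Y]=m/(n-1)$, so Chebyshev's inequality yields
$$\Pr\left[\left||F\cap\mc{M}_X|-\frac{|F|}{|X|-1}\right|\ge\lambda\right]\le\frac{\Var(Y)}{\lambda^2},$$
and it is enough to prove that $\Var(Y)\le n$.

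To estimate $\Var(Y)$ I would expand it as $\sum_{e,f\in F}\operatorname{Cov}\left(\mathbf{1}[e\in\mc{M}_X],\mathbf{1}[f\in\mc{M}_X]\right)$ and evaluate each covariance from \eqref{e:MBounds}. There are three cases: $e=f$ contributes the positive term $\frac{1}{n-1}-\frac{1}{(n-1)^2}$; distinct disjoint $e,f$ contribute $\frac{1}{(n-1)(n-3)}-\frac{1}{(n-1)^2}$, which is again positive (these two events are in fact positively correlated, since conditioning on $e\in\mc{M}_X$ only shrinks the ground set on which the rest of the matching is chosen); and $e,f$ sharing a vertex contribute $-\frac{1}{(n-1)^2}$, as such edges cannot lie in a common matching. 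Writing $d_v$ for the number of edges of $F$ incident with $v$ (so that $\sum_v d_v=2m$), the number of ordered pairs of distinct disjoint edges of $F$ equals $m^2+m-\sum_v d_v^2$, and the number of ordered pairs of distinct edges of $F$ sharing a vertex equals $m(m-1)-\left(m^2+m-\sum_v d_v^2\right)$; substituting and collecting terms gives the closed form
$$\Var(Y)=\frac{1}{(n-1)^2}\left(mn+\frac{2m^2+2m-(n-1)\sum_v d_v^2}{n-3}\right).$$

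The one step that requires care---and the reason the estimate is not purely mechanical---is that one cannot simply discard the term $-\sum_v d_v^2$: bounding it away via $\sum_v d_v^2\ge 0$ (equivalently, using the bound of at most $m(m-1)$ ordered disjoint pairs) together with $m\le\binom{n}{2}$ gives only a bound of roughly $n+1$, which just fails to be $\le n$. Instead I would apply the Cauchy--Schwarz inequality in the form $\sum_v d_v^2\ge (2m)^2/n$. After this substitution the right-hand side above becomes a concave quadratic function of $m$, whose maximum over $m\ge 0$ is attained at $m=\tfrac14 n(n-1)$ and equals $\tfrac{n(n-2)}{8(n-3)}$; this is at most $n$ precisely because $n\ge 4$, since $\tfrac{n-2}{8(n-3)}\le 1$ is equivalent to $7n\ge 22$. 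This proves $\Var(Y)\le n$ and hence the lemma. I expect the covariance bookkeeping and the concave optimization to be entirely routine once set up; the only genuine pitfall is losing the $\sum_v d_v^2$ term too early.
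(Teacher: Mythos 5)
Your argument is correct, and it proves $\Var(Y)\le n$ by a route genuinely different from the paper's. The paper first reduces to the case $|F|\le\tfrac14|X|(|X|-1)$ by passing to $X^{(2)}\setminus F$ (which is harmless because $|F\cap\mc{M}_X|-\tfrac{|F|}{|X|-1}$ simply changes sign under complementation, as $|\mc{M}_X|=|X|/2$ is deterministic), then discards the negative covariance terms coming from intersecting pairs of edges and uses only the crude count $|F|^2$ for disjoint ordered pairs; after the reduction this crude estimate already gives $\Var\le\tfrac{|X|}{4}+\tfrac{|X|^2}{8(|X|-3)}\le|X|$. You skip the complementation and instead keep the exact covariance decomposition, using Cauchy--Schwarz ($\sum_v d_v^2\ge 4m^2/n$) to retain just enough of the negative contribution from intersecting pairs, followed by a one-variable concave maximization over $m$. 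I have checked both the closed form $\Var(Y)=\tfrac{1}{(n-1)^2}\bigl(mn+\tfrac{2m^2+2m-(n-1)\sum_v d_v^2}{n-3}\bigr)$ and the optimization giving the maximum $\tfrac{n(n-2)}{8(n-3)}\le n$ for $n\ge 4$; both are right. Your diagnosis of the pitfall is also accurate: without either the complement reduction or the $\sum_v d_v^2$ term, plugging $m\le\binom n2$ into the crude bound gives roughly $n+2$, so one really does need one of the two devices. The paper's route is shorter once one notices the symmetry of the problem under complementation; yours is more mechanical but requires no such observation, at the cost of an exact covariance computation and a small optimization.
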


\begin{proof} We may assume that  
\begin{equation}\label{e:FUpper} |F| \leq \frac{|X|(|X|-1)}{4}\end{equation} 
by replacing $F$ with $X^{(2)} -F$ if needed. For $e \in F$, let $Z_e$ be the indicator random variable with $Z_e=1$ if $e \in \mc{M}_X$ and $Z_e=0$, otherwise.  Let $Z =\sum_{e \in F}Z_e = |F \cap \mc{M}_X|$. By \eqref{e:MBounds} we have $E[Z] = |F|/(|X|-1)$ and
	\begin{align*}
	\Var[Z] &= \sum_{(e,f) \in F \times F} \brm{Cov}[Z_e,Z_f] \\  
&\leq \sum_{e \in F} \brm{Cov}[Z_e,Z_e] + \sum_{e \in F  }\sum_{f \in F, f \cap e = \emptyset} \brm{Cov}[Z_e,Z_e] \\ 
&\stackrel{\eqref{e:MBounds}}{\leq} 
\frac{|F|}{|X|-1} + |F|^2\s{ \frac{1}{(|X|-1)(|X|-3)} - \s{\frac{1}{|X|-1}}^2} \\ 
&\stackrel{\eqref{e:FUpper}}{\leq} \frac{|X|}{4} + \frac{|X|^2}{8(|X|-3)} \\ 
&\leq |X|.
	\end{align*} 	
	
	 Thus by Chebyshev's inequality (see for instance~\cite[Theorem 4.1.1]{AloSpe16})  $$	\Pr\left[\left| |F \cap \mc{M}_X| - \frac{|F|}{|X|-1}\right| \geq \lambda \right] \leq \Pr[||Z| - E[Z]| \geq \lambda] \leq \frac {\Var[Z]}{\lambda^2} \leq \frac{|X|}{\lambda^2},$$
as desired.
\end{proof}	

We are now ready for the main technical lemma.

\begin{lem}\label{l:main}
	Let $G$ be a graph  with $|V(G)| \geq 6$ and even, with $\alpha(G) \leq 2$, and with $\omega(G) < |V(G)|/4$; and define
$n=|V(G)|/2$ and $k=\omega(G)$. 
Let $Z$ be a clique of $G$ with $|Z|=k$, and define $a=\sum_{v \in Z} \deg_{\bar{G}}(v) $ and $b = |E(\bar{G} \setminus Z)|$. 
	 Let $0 < \lambda \leq \frac{k-1}{2}$ with $\lambda^2>2n$, and let 
$$p = \frac{n-2k}{n-\frac{k+1}{2} - \frac{b}{2n-k-2}-\lambda}.$$ 
Then there is a minor $H$ of $G$ with $|V(H)|= n$ and
	\begin{equation}\label{e:J}\binom{n}{2}-|E(H)| \leq \frac{1}{1-\frac{2n}{\lambda^2}} \s{\frac{b(k-1)^2p^2}{4(2n-k-2)(2n-k-4)}+ \frac{a(k-1)p}{2(2n-k-2)}}.
	\end{equation}
\end{lem}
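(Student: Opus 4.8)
The plan is to realize the construction sketched in the introduction: take $Z$ as the clique, partition the vertices outside $Z$ into a matching $M$ of size $n-2k$ and a family of $k$ seagulls, and contract. Concretely, let $X\subseteq V(G)\setminus Z$ be obtained by deleting one vertex when $|V(G)\setminus Z|$ is odd, so that $|X|$ is even with $2n-k-1\le |X|\le 2n-k\le 2n$. Draw a uniformly random partition $\mc{M}=\mc{M}_X$ of $X$ into pairs, and call a pair \emph{bad} if it is not an edge of $G$ (equivalently, an edge of $\bar G$). Since the non-neighbours of any vertex form a clique, $\bar G$ has maximum degree at most $\omega(G)=k$, so $\bar G[X]$ has at most $\min(b,\,k|X|/2)$ edges, and by \eqref{e:MBounds} the expected number of bad pairs is this quantity divided by $|X|-1$. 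Let $E^{\ast}$ be the event that the number of bad pairs is below its mean plus $\lambda$; by \cref{l:Chebyshev} applied with $F=E(\bar G[X])$, and using $\lambda^{2}>2n$, we get $\Pr[E^{\ast}]\ge 1-|X|/\lambda^{2}\ge 1-2n/\lambda^{2}>0$.

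Next I would fix any outcome of $\mc{M}$ lying in $E^{\ast}$. A short calculation — here the hypotheses $\lambda\le\tfrac{k-1}{2}$, the bound $b\le k|X|/2$, and $\omega(G)<|V(G)|/4$ are used — shows that the number $m$ of pairs of $\mc{M}$ that are edges of $G$ satisfies $m\ge D:=n-\tfrac{k+1}{2}-\tfrac{b}{2n-k-2}-\lambda$, and that $D\ge n-2k$ (so $p\le 1$). Choose $n-2k$ of these ``good'' pairs to be $M$. The $3k$ vertices of $X$ not covered by $M$, together with the deleted vertex if any, form a set $S$ with $|S|=3k$, $\alpha(G[S])\le 2$ and $\omega(G[S])\le k$, so \cref{l:seagull} splits $S$ into $k$ disjoint seagulls. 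Contracting each pair of $M$ and each seagull, and keeping $Z$ as singleton branch sets, yields a minor $H$ of $G$ with $|V(H)|=(n-2k)+k+k=n$.

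Because every vertex of $G$ has a neighbour in every seagull and $Z$ is a clique, the only edges possibly missing from $H$ are between two contracted matching pairs, or between a contracted matching pair and a vertex of $Z$; a pair $\{u,v\}\in M$ misses $z\in Z$ iff $z$ is a non-neighbour of both $u,v$, and $\{u_1,v_1\},\{u_2,v_2\}\in M$ are non-adjacent iff all four cross pairs are edges of $\bar G$. Rather than fix $M$ arbitrarily, I would take $M$ to be a uniformly random $(n-2k)$-subset of the $m$ good pairs. By linearity the expected number of missing edges of $H$ is $\tfrac{n-2k}{m}\,\Phi_1(\mc{M})+\tfrac{(n-2k)(n-2k-1)}{m(m-1)}\,\Phi_2(\mc{M})$, where $\Phi_1(\mc{M})$ counts pairs $e\in\mc{M}$ with $e\subseteq\bar N_G(z)$ for some $z\in Z$ and $\Phi_2(\mc{M})$ counts non-adjacent pairs of pairs of $\mc{M}$; since $m\ge\max(D,n-2k)$ the two coefficients are at most $p$ and $p^{2}$. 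Hence, for every outcome of $\mc{M}$ in $E^{\ast}$, some choice of $M$ gives a minor $H$ with $\binom{n}{2}-|E(H)|\le p\,\Phi_1(\mc{M})+p^{2}\,\Phi_2(\mc{M})$.

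Finally I would average over $\mc{M}$. From \eqref{e:MBounds}, $\deg_{\bar G}(z)\le k$, and $\sum_{z\in Z}\deg_{\bar G}(z)=a$, one gets $\e[\Phi_1]\le\tfrac{a(k-1)}{2(2n-k-2)}$; and bounding the number of disjoint pairs $e,f\in X^{(2)}$ with all four cross pairs in $\bar G$ — charge each to an oriented edge of $\bar G[X]$ (at most $2b$ of them) and two further vertices, each chosen from a $\bar G$-neighbourhood minus a forbidden vertex (so at most $k-1$ choices each), with overcount $8$ — gives $\e[\Phi_2]\le\tfrac{b(k-1)^{2}}{4(2n-k-2)(2n-k-4)}$. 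Thus the expectation of $p\Phi_1+p^{2}\Phi_2$ restricted to $E^{\ast}$ is at most $(1-2n/\lambda^{2})$ times the right-hand side of \eqref{e:J}, so some outcome in $E^{\ast}$ makes $p\Phi_1+p^{2}\Phi_2$ at most that right-hand side, and the associated $H$ is the one required. The delicate point is exactly this coupling: the random pairing must simultaneously be unlikely to have many bad pairs (so that $M$ exists at all and $p$ genuinely controls the sub-selection) and, in expectation, have few defects of each type, and one has to verify that conditioning on $E^{\ast}$ costs only the factor $1/(1-2n/\lambda^{2})$ — which is what forces the hypotheses on $\lambda$. The other place needing care is the count behind $\e[\Phi_2]$, which leans on $\alpha(G)\le 2$ (triangle-freeness of $\bar G$) together with $\Delta(\bar G)\le k$.
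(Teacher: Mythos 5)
Your proposal is correct and follows essentially the same route as the paper's proof: delete $Z$ and possibly one more vertex, draw a uniform random pairing of the remainder, use \cref{l:Chebyshev} to get that with probability at least $1-2n/\lambda^2$ enough pairs are edges of $G$, subsample $n-2k$ good pairs, complete with $k$ seagulls via \cref{l:seagull}, and bound the missing edges of the resulting minor by counting configurations of the bad triple/quadruple type with the same $2b(k-1)^2/8$ and $a(k-1)/2$ enumerations. The only difference is organizational: you condition on the good event $E^*$ and then average $p\Phi_1+p^2\Phi_2$ over $\mathcal{M}$, whereas the paper packages the same calculation into a two-stage random matching $\mathcal{M}^*$ and computes $\Pr[e\in\mathcal{M}^*]$ and $\Pr[e,f\in\mathcal{M}^*]$ directly; both yield the factor $1/(1-2n/\lambda^2)$ the same way. (One tiny wording slip: $\Phi_1$ should count ordered pairs $(e,z)$ with $z\in Z$ and $e\subseteq \bar N_G(z)$, not merely pairs $e$ for which \emph{some} such $z$ exists, since a single contracted pair can miss several vertices of $Z$; the bound $\mathbb{E}[\Phi_1]\le a(k-1)/(2(2n-k-2))$ you wrote is the correct bound for that $(e,z)$ count, so the argument is unaffected.)
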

\begin{proof}
	Let the graph $G'$ be obtained from $G$ by deleting $Z$ and at most one other vertex, so that $|V(G')| \geq 2n-k-1$ is even. Let $x=|V(G')|$. Note that $x \geq \lceil  3n/2 \rceil -1 \geq 4$. 
	
	 Let $\mc{A}$ be the set of all partitions $M$ of $V(G')$ into pairs that satisfy    
\begin{equation}\label{e:a}|M \cap E(G')| \geq \frac{|E(G')|}{x-1} - \lambda. \end{equation} 
Note that $\deg_{\bar{G}}(v) \leq k$ for every $v \in V(G)$. Thus $\deg_{G'}(v) \geq x - k-1$ for every $v \in V(G')$, and so $|E(G')| \geq \frac{x(x - k-1)}{2}$.
	As $\lambda \leq \frac{k-1}{2}$, it follows that for every $M \in \mc{A}$ we have
\begin{align}\label{e:MH}  |M \cap E(G')| &\geq \frac{x(x - k-1)}{2(x-1)}-\lambda \geq  \frac{x}{2} - \frac{kx}{2(x-1)} - \frac{k-1}{2} \geq n-2k\end{align} 
(since $x\ge 2n-k-1$ and $x/(x-1)\le 4/3$).
Moreover, since $|E(G')|+b\ge x(x-1)/2$, it follows that for every $M \in \mc{A}$
	 \begin{align}\label{e:MH2} |M \cap E(G')| &\geq \frac{x}{2} - \frac{b}{x-1}-\lambda \geq n-\frac{k+1}{2} - \frac{b}{2n-k-2}-\lambda. \end{align}

	 Let $\mc{M}=\mc{M}_{V(G')}$ be a partition of $V(G')$ into pairs chosen uniformly at random. 
Let $q = 1-\frac{2n}{\lambda^2}$. Thus $q>0$. 
By \cref{l:Chebyshev}, applied with $X=V(G')$ and $F=E(G')$, we have \begin{equation}\label{e:q}\Pr[\mc{M} \in \mc{A}] \geq 1-\frac{x}{\lambda^2} \geq q.\end{equation}  
	
	Let $\mc{M}^{*}$ be a random matching in $G'$, obtained by choosing $M \in \mc{A}$ uniformly at random  and then  
choosing a subset of $n-2k$ edges of $M \cap E(G')$, also uniformly at random. Such a selection is possible by \eqref{e:MH}; and 
once $M$ is chosen, every edge of $M \cap E(G')$ is selected with probability at most $\frac{n-2k}{|M \cap E(G')|} \leq p$ 
by \eqref{e:MH2}. We have
$$\Pr[e\in \mc{M}]\ge \Pr[e\in \mc{M}\text{ and } \mc{M}\in \mc{A}]=\Pr[e\in M]\cdot\Pr[\mc{M} \in \mc{A}];$$
so using \eqref{e:MBounds} and \eqref{e:q}, we obtain
	\begin{equation}\label{e:P}\Pr[e \in \mc{M}^{*}] \le p\cdot \Pr[e\in M] \leq p \cdot \frac{\Pr[e \in \mc{M}]}{\Pr[\mc{M} \in \mc{A}]} \leq \frac{p}{q(x-1)}\end{equation} for every $e \in E(G')$.
	Similarly, once $M \in \mc{A}$ is chosen, a given pair of edges in $M \cap E(G')$ is selected to be in $\mc{M}^{*}$ with probability at most $p^2$, and thus \begin{equation}\label{e:P2}\Pr[e,f \in \mc{M}^{*}]   \leq \frac{p^2}{q(x-1)(x-3)}\end{equation} 
	for every pair of distinct $e,f \in E(G')$.

	Next we construct a minor $H$ of $G$ using $\mc{M}^{*}$. Let $S$ be the set of all vertices in $V(G)-Z$ incident with 
no edges of  $\mc{M}^{*}$. Then $|S|=3k$, and so there exists a collection $\mc{S}$ of $k$ pairwise disjoint seagulls in $G$ 
with all vertices in $S$, by \cref{l:seagull} applied to the subgraph of $G$ induced by $S$. We now obtain $H$ by contracting all the edges of $\mc{M}^*$ and of the seagulls in $\mc{S}$. The rest of the proof consists of showing that the expectation of
	$\binom{n}{2}-|E(H)| = |E(\bar{H})|$ satisfies \eqref{e:J}. 
	
	We say that a subset $Q \subseteq V(G')$ is \emph{a bad quadruple} if $|Q|=4$ and the subgraph of $G'$ induced on $Q$ is a 
matching of size two.  A subset $T \subseteq V(G)$ is \emph{a bad triple} if $|T|=3$, there exists a unique vertex $z \in T \cap Z$,
 and $z$ is not adjacent to the other vertices of $T$.
	
	Every vertex of $G$ has a neighbour in every seagull, and the vertices of $Z$ are pairwise adjacent.
	Thus every pair of non-adjacent vertices of $H$ corresponds to either a bad triple which includes an edge of $\mc{M}^{*}$ or a bad quadruple which consists of the union of vertex sets of two edges of $\mc{M}^{*}$. We finish the proof by 
bounding above the expected number of bad triples and quadruples of this form.
	
	First, we give an upper bound for the total number of bad quadruples by considering ordered sequences $(u,v,w,z)$ of distinct vertices of $G'$ such that 
$\{u,v,w,z\}$ is a  bad quadruple and $uw,vz \in E(G')$. Note that every bad quadruple corresponds to eight such 
ordered sequences. There are at most $2|E(\bar{G'})|\leq 2b$ ways to choose the non-adjacent pair $(u,v)$, and for fixed 
$(u,v)$, the vertex $w \in V(G')-\{u\}$ must be one of at most $k-1$ remaining non-neighbours of $v$, and $z$ must be chosen among at 
most $k-1$ non-neighbours of $v$.   Thus there at most $2b(k-1)^2$ ordered sequences  $(u,v,w,z)$ as above, and so at most 
$b(k-1)^2/4$ bad quadruples. By \eqref{e:P2} a given bad quadruple contains two edges of $\mc{M}^*$ with probability at most $\frac{p^2}{q(x-1)(x-3)}$. It follows that  the expected number of pairs of non-adjacent vertices of $H$ coming from bad quadruples is at most $$\frac{1}{q}\cdot \frac{b(k-1)^2p^2}{4(x-1)(x-3)}.$$
   
    Similarly, we give an upper bound for the total number of bad triples by considering ordered triples $(z,v,w)$ such that 
$z \in Z$ and $v,w \in V(G')$ are distinct non-neighbours of $z$.  There are $a$ ways of choosing the pair $(z,v)$,  and at most $k-1$ ways of choosing the second non-neighbour $w$ of $z$. As every bad triple corresponds to two 
such ordered triples, there are at most $a(k-1)/2$ bad triples, and using \eqref{e:P} it follows that the expected number of pairs 
of non-adjacent vertices of $H$ coming from bad triples is at most 
    $$\frac{1}{q} \cdot \frac{a(k-1)p}{2(x-1)}.$$
    As $x \geq 2n-k-1$, it follows that 
   \begin{align*}\bb{E}[|E(\bar{H})|] &\leq \frac{1}{q}\s{\frac{b(k-1)^2p^2}{4(x-1)(x-3)} + \frac{a(k-1)p}{2(x-1)}} \\ 
&\leq \frac{1}{1-\frac{2n}{\lambda^2}} \s{\frac{b(k-1)^2p^2}{4(2n-k-2)(2n-k-4)}+ \frac{a(k-1)p}{2(2n-k-2)}}, \end{align*}
	as desired.	
\end{proof}

It remains to optimize the bound in \cref{l:main} for large $n$.

\begin{proof}[Proof of \cref{t:main}]
We may assume that the number of vertices of $G$ is even, without loss of generality. To see this, suppose that the theorem 
holds for graphs with an even number of vertices, while  the number of vertices of $G$ is odd. Choose $v\in V(G)$; there 
is a minor $H$ of  $G \setminus v$ on $\lceil |V(G)|/2\rceil-1$ vertices $|{E}(H')| \geq  \s{\gamma - o(1)}\binom{|V(H')|}{2}.$ 
By adding the vertex $v$ to $H'$ we obtain a minor $H$ of $G$ satisfying the theorem.

Let $n = |V(G)|/2$ and let $k=\omega(G)$. If $k \geq n/2$ then $G$ contains a $K_n$ minor by \cref{t:seagull2}.  
Thus we may assume that $k < n/2$ and apply \cref{l:main} with $\lambda   = n^{2/3}$.  Let $Z,a,b,p$ and $H$ be as in \cref{l:main}. We will show that $H$ satisfies the theorem. 
 
Certainly $\frac{2n}{\lambda^2} = o(1)$. We have $a+2b = \sum_{v \in V(G)-Z} \deg_{\bar{G}}(v) \leq k(2n-k)$, and so
$ b \leq \frac{k(2n-k)-a}{2}$, 
and 
$$p \leq \frac{n-2k}{n-k+\frac{a}{2(2n-k)}} + o(1).$$ 
Let $z=\frac{k}{2n}$ and let $\zeta = \frac{a}{4n^2}$. It follows that
$ b \leq 2n^2((1-z)z-\zeta)$, and 
$$p\le \frac{1-4z}{1-2z+\frac{\zeta}{1-z}}+o(1).$$
Plugging the bounds above into \eqref{e:J}  yields 

\begin{align}\binom{n}{2}-|E(H)| 
&\leq \s{\frac{((1-z)z-\zeta)z^2}{2(1-z)^2}\fs{1-4z}{1-2z+\frac{\zeta}{1-z}}^2+ \frac{2\zeta z}{(1-z)}\fs{1-4z}{1-2z+\frac{\zeta}{1-z}}}n^2 + o(n^2) \notag\\
&=
 \frac{z (1- 4 z) ( z^2 (1 - 5 z + 4 z^2) +
 \zeta (4 - 13 z + 12 z^2)+ 4 \zeta^2)}{ (1 +  \zeta - 3 z + 2 z^2)^2}\binom{n}{2}+ o(n^2) \label{e:J2}
 \end{align}


We have $z \leq 1/4$, as $k\leq n/2$. Moreover, $ a = \sum_{v \in Z} \deg_{\bar{G}}(v) \leq k^2$,  and so $\zeta \leq z^2$.  It is straightforward to verify that the last expression in \eqref{e:J2} increases with $\zeta$ for $0 \leq \zeta \leq z^2 \leq 1/16$. Substituting $\zeta=z^2$ yields 
$$|E(H)| \geq \s{1- \frac{z^3 (5 - 38 z + 92 z^2 - 80 z^3)}{(1 - 3 z + 3 z^2)^2}}\binom{n}{2}- o(n^2). 
$$
The coefficient of $\binom{n}{2}$ on the right side is minimized (for $0\le z\le 1/4$) when $z\sim .193984$, and its value there
is approximately
$0.986882$, which implies \eqref{e:main}.	
\end{proof}

\subsubsection*{Acknowledgements.} This research was partially completed at a workshop held at the Bellairs Research Institute in Barbados in March 2022. We thank the participants of the workshop for creating a stimulating working environment, and Bojan Mohar, Yingjie Qian, St\'{e}phan Thomass\'{e}, Fan Wei and Liana Yepremyan, in particular, for helpful discussions.

\bibliographystyle{alpha}
\bibliography{snorin}

\end{document}